\newtheorem{theorem}{Theorem}[section]
\newtheorem{lemma}[theorem]{Lemma}
\theoremstyle{definition}
\numberwithin{equation}{section}
\newcommand{\M}{\mathcal{M}}
\newcommand{\N}{\mathbb{N}}
\newcommand{\Z}{\mathbb{Z}}
\newcommand{\R}{\mathbb{R}}
\numberwithin{theorem}{section} 
\theoremstyle{definition}
\numberwithin{equation}{section}
\newcommand\reallywidehat[1]{%
\savestack{\tmpbox}{\stretchto{%
  \scaleto{%
    \scalerel*[\widthof{\ensuremath{#1}}]{\kern.1pt\mathchar"0362\kern.1pt}%
    {\rule{0ex}{\textheight}}
  }{\textheight}%
}{2.4ex}}%
\stackon[-6.9pt]{#1}{\tmpbox}%
}
\begin{document}
\title[Hardy-Littlewood Maximal Inequality]{A Direct Proof for Operator Hardy-Littlewood Maximal Inequality}

\author[C. Chuah]{Chian Yeong  Chuah}
\address[C. Chuah]{Department of Mathematics \\
The Ohio State University\\
231 West 18th Avenue\\
Columbus, OH 43210-1174, USA\\
 ORCid:0000-0003-3776-6555}
\email{{chuah.21@osu.edu}}

\author[Z. Liu]{Zhen-Chuan Liu} 
\address[Z. Liu]{Departamento de Matem\'aticas \\
Universidad Aut\'onoma de Madrid\\
C/ Francisco Tomás y Valiente, 7 Facultad de Ciencias, módulo 17, 28049 Madrid,Spain.\\
 ORCid: 0000-0002-6092-5473}
\email{{liu.zhenchuan@uam.es}}
	
  \author[T. Mei]{ Tao Mei}
  \address[T. Mei]{Department of Mathematics\\
Baylor University\\
1301 S University Parks Dr, Waco, TX 76798, USA.\\
 ORCid: 0000-0001-6191-6184}
\email{{tao\_mei@baylor.edu}}

\keywords{Maximal Inequality, Schatten $p$-class, von Neumann Algebra, $L_p$-Space}

\subjclass[2010]{Primary:  46B28, 46L52. Secondary: 42A45.}

\date{}
\maketitle

\begin{abstract}
We give a direct proof of the operator valued Hardy-Littlewood maximal inequality for $2\leq p<\infty$, which was first proved in \cite{Me07}. 
\end{abstract}

\section{Introduction}

The operator valued Hardy-Littlewood maximal inequality (\cite{Me07}) has become a basic tool in the  study of noncommutative analysis. See e.g.   \cite{CzXqYz13,HgLbWs21,PjReSm22} for the recent works which apply this inequality. 

The original proof of the inequality contained in \cite{Me07} reduces the problem to the martingale case where the noncommutative Doob's maximal inequality due to M. Junge (\cite{Ju02}) is applied. G. Hong  reproved this maximal inequality  in \cite{Hg13}. He follows Stein's idea of dominating the Hardy-Littlewood maximal function by maximal averages of heat semigroup operators and applies the noncommutative maximal ergodic theory due to M. Junge and Q. Xu (\cite{JuXq07}).  Both proofs are indirect which prevent  researchers, who are not familiar with the terminology of noncommutative analysis  from  a good understanding of the theorem. The purpose of this article is to provide a direct and more understandable  proof of this inequality for the case $p\geq 2$.

Recall that, for a locally integrable function $f$, the Hardy-Littlewood maximal function is defined as 
$$Mf(x)=\sup_t\frac1{2t}\int_{I(x,t)}|f(y)|dy,$$
where $I(x,t)$ is the interval centered at $x$ with length $2t.$ The classical Hardy-Littlewood maximal inequality states that 
\begin{eqnarray}
\|Mf\|_{L^p}\leq c\frac {p}{p-1}\|f\|_{L^p},
\end{eqnarray}
for all $f\in L^p(\R), 1<p\leq \infty$. 

Let $X$ be a Banach space, for X-valued functions $f$, their maximal function $Mf$ can be defined by considering the  maximal function of  the norm of $f$, 
\begin{eqnarray}
Mf(x)=\sup_t\frac1{2t}\int_{I(x,t)}\|f(y)\|_{X}dy.\label{Mf}
\end{eqnarray}
Apply the   classical Hardy-Littlewood maximal inequality to the $L_p$ function $\|f\|_X$, one obtains that 
\begin{eqnarray}
\|Mf\|_{L^p(X)}\leq c\frac {p}{p-1}\|f\|_{L^p(X)}=(\int_{\R} \|f\|_X^p)^{\frac1p},\label{HM2}
\end{eqnarray}
for all $f\in L^p(\R,X), 1<p\leq \infty$. A shortcoming is that  this type of   maximal function is scalar-valued  and may lose a lot of information of $X$ that $f$ originally carried. 

When $X=L^p(\Omega)$ with $\Omega$ a measurable space, one may define a $X$-valued  maximal function $F$ as  
\begin{eqnarray}
F(x,\omega)=\frac1{2t}\int_{I(x,t)}|f(y,\omega)|dy,\label{XM1}
\end{eqnarray}
and deduce from \eqref{HM2} that
\begin{eqnarray}
\|F\|_{L^p(\R,L^p( \Omega))}\leq c\frac {p}{p-1}\|f\|_{L^p(\R, L^p(\Omega))}.\label{HM3}
\end{eqnarray}

Note that in \eqref{XM1}, $F$ is a $X$-valued function that dominates the average of $|f|$ pointwisely, while in \eqref{Mf}, $Mf$ is merely a scalar valued function that carries much less information. Can  similar results like \eqref{XM1} and \eqref{HM3} hold for  $X$-valued functions when the Banach space $X$ is not equipped with a total  order but still has a reasonable partial order, e.g. for $X=$ the Schatten $p$ classes?
Based on  G. Pisier's work on operator spaces and M. Junge and Junge/Xu's work on the theory of  noncommutative martingales,   T. Mei (\cite{Me07}) proved a maximal inequality like \eqref{HM3}  for $X$ being the noncommutative $L^p$-spaces. In the case that  $X=$ the Schatten $p$-class $S_p$ and $f \in L_p( {S_p})$  for some $1< p\leq \infty$, this operator-maximal inequality says that there exists a   $S_p$-valued function $F$  such that 

(i) $\frac1{2t}\int_{x-t}^{x+t}|f(y)|dy\leq F(x)$ as operators for all $t>0$, i.e. $F-\frac1{2t}\int_{x-t}^{x+t}|f(y)|dy \in S^p$ is a self adjoint positive definite operator almost everywhere.

(ii) There exists an absolute constant $c $ such that  \begin{eqnarray}
\|F\|_{L^p(S^p)}\leq c\frac {p^2}{(p-1)^2}\|f\|_{L^p(S^p)}.\label{HM4}
\end{eqnarray}

A main obstacle for the proof of \eqref{HM4} is the lack of a total order. This was overcome by M. Junge using G. Pisier's  duality result for the operator spaces $L_p(\ell_\infty)$ and $L_p(\ell_1 )$. This article   aims   to 
give a direct proof for \eqref{HM4} and a more understandable proof of Pisier's duality  result (Lemma  \ref{lem:duality}) for  analysts who are not familiar with operator spaces.  
\section{Preliminary}


Let $H$ be a separable Hilbert space. We denote the space of bounded linear operators on $H$ by $B(H)$. For $x\in B(H)$, we denote by $x^*$ the adjoint operator of $x$, and define  $ |x|^p=(x^*x)^\frac p2$ by the functional calculus for $0<p<\infty$. We say that $x\in B(H)$ is self-adjoint if $x=x^*$. We say a self-adjoint operator $x$ is positive, denoted by $x\geq0$ if 
\begin{eqnarray}\label{geq0}
\langle xe,e\rangle\geq 0,
\end{eqnarray}
for all $e\in H$. This is equivalent to saying that $x=y^*y$ for some $y\in B(H)$. For two self-adjoint $x,y\in B(H)$, we write $x\leq y$ if $y-x\geq0$. 
The Schatten $p$-classes $S_p$, $0< p<\infty$ are the spaces of $x\in B(H)$ so that  
$$\|x\|_p=(tr |x|^p)^\frac1p<\infty.$$
Here, $tr$ is the usual trace $tr(x)=\sum_k\langle xe_k, e_k\rangle$ for $x\geq 0 \in B(H)$. The
Schatten $p$ classes share many properties with the $\ell_p$ spaces of sequences.  In particular, $S_p$ are Banach spaces for   $p\geq 1$ and $B(H)$ (resp. $S_q$)  is the dual space of $S_1$ (resp. $S_p$ for $1<p,q<\infty, \frac1p+\frac1q$),   via the isometric isomorphism,
$$x\mapsto \phi_x: \phi_x(y)=tr(xy)$$
for $y\in S_1$ (resp. $S_p$). 

\subsection{Noncommutative $L_p$ spaces} A {\it von Neumann algebra}, by definition, is  a weak-$*$ closed subalgebras $\M$ of $B(H)$. 
The completeness according to the weak $*$-topology of $\M$ ensures that it contains  the spectral projections of its self-adjoint elements.  $\ell_\infty({\Bbb N})$ which is isometrically isomorphic to the subalgebras of the diagonal operators and $B(H)$ itself are two basic examples of von Neumann algebras.  
The usual trace $\tau=tr$ on $B(H)$ is a linear functional on the weak-$*$ dense subspace $S_1$ satisfying the following properties,
\begin{itemize}
\item[i)] Tracial: $\tau(xy)=\tau(xy)$,

\item[ii)] Faithful: if $x \ge 0$ and $\tau (x)=0$ then $x=0$,

\item[iii)] Lower semi-continuous: $\tau (\sup x_i)=\sup\tau (x_i)$ when $x_i\geq0$ is increasing,

\item[iv)] Semifinite: for any $x\geq 0$, there exists $0 \le y \leq x$ such that $\tau (y) < \infty$.
\end{itemize}
This leads to defining {\it semifinite von Neumann algebras} $\M$ as those equipped with a trace $\tau$, which is an unbounded linear   functional satisfying (i)-(iv) for $x,y$ belonging to a weak $*$ dense subspace.  Note the restriction of the usual trace of $B(H)$ on $\M$ may not be semifinite, and not every von Neumann algebras is semifinite. Given such a pair $(\M,\tau)$ (which  is usually viewed   as a noncommutative $L_\infty$- space) the {\it noncommutative $L_p$ spaces} associated to it are the completion of $f\in{\mathcal M}$ with finite quasi norm $$\|f\|_p \, = \, \left[ \tau \left( |f|^p \right) \right]^\frac1p \quad \mbox{for} \quad 0< p < \infty,$$ where $|f|^p = (f^*f)^{p/2}$ is constructed via functional calculus. We set $L_\infty(\M)=\M$.

The Schatten $p$-classes $S_p$ and the $L_p$ spaces on a semifinite measure space $(\Omega,\mu)$  are examples of noncommutative $L_p$ spaces associated with $\M = B(H)$ and ${\M}=L_\infty(\Omega,\mu)$ respectively. Another basic example arises from group von Neumann algebras. Every commutative semifinite von Neumann algebra is isometrically isomorphic to the space $L_\infty(\Omega,\mu)$ of essentially bounded functions on some semifinite measure space $(\Omega,\mu)$. 
Many basic  properties of $L_p(\Omega,\mu)$ extend to $L_p(\M)$. In particular, one has the H\"older's inequality which states that
\begin{eqnarray*}
\|xy\|_p\leq \|x\|_r\|y\|_q,
\end{eqnarray*}
for $x\in L_r(\M),y\in L_q(\M), 0\leq p,q,r,\leq \infty, \frac1q+\frac1r=\frac1p$. The  interpolation properties of $L_p(\Omega,\mu)$ extend to $L_p(\M)$ as well, and the duality  properties of $L_p(\Omega,\mu)$ extend to $L_p(\M)$ for the range $p\geq 1$. The elements in $L_p(\M)$ may not belong to $\M$  or $B(H)$ in general. They can be understood as unbounded operators affiliated with $\M$. 
For self-adjoint elements $x,y$ in $L_p(\M)$, we  say $x$ is positive, denoted by  $x\geq 0$   if \eqref{geq0} holds (the quantity may be $\infty$ though). We write $x\leq y$ if $y-x\geq0$.

Define $\ell_\infty(L_{p}(\mathcal{M}))$ as the space of all sequences $x=(x_{n})_{n\geq 1}$ in $L_{p}(\mathcal{M})$ such that
\begin{eqnarray}
\|x\|_{\ell_{\infty}(L_{p}(\mathcal{M}))}= \sup_n\|x_n\|_p<\infty. \label{linftyp}
\end{eqnarray}
  Define $\ell_1(L_{p}(\mathcal{M}))$ as the space of all sequences $x=(x_{n})_{n\geq 1}$ in $L_{p}(\mathcal{M})$ such that
\begin{eqnarray}
\|x\|_{\ell_{1}(L_{p}(\mathcal{M}))}=  \sum_n\|x_n\|_p<\infty\label{l1p}.
\end{eqnarray}
We have the duality that 
\begin{eqnarray*}
 \ell_{\infty}(L_{q}(\mathcal{M}))=(
\ell_{1}(L_{p}(\mathcal{M})))^*
\end{eqnarray*}
for $1\leq p<\infty, \frac1p+\frac1q=1$, via the isometric isomorphism 
$$x\mapsto \varphi_x; \varphi_x(y)=\sum_n \tau (x_ny_n).$$
On the other hand,  $
\ell_{1}(L_{q}(\mathcal{M}))$ isometrically embeds into the dual of $\ell_{\infty}(L_{p}(\mathcal{M}))$ as  a weak $*$-dense subspace via the same isomorphism for the same range and relation of $p,q$. We refer to the survey paper \cite{PiXq03} for more information on noncommutative $L_p$ spaces.

  \subsection{Pisier's $L_p(\M;\ell_{\infty})$ norm}
  
  Given two positive operators $x,y\in L_p(\M)$ e.g. $ x,y\in S_p$, the expression $\sup(x,y)$ does not make sense unless $x,y$ commutes so that the least upper element exists. 
  Nevertheless, the following is a reasonable expression for $\|\sup_n x_n\|_{L_p(\M)}$ for   sequences of positive elements $x_n$ in $L_p(\M)$,
  \begin{eqnarray}
  \||(x_n)\|_{L_p(\M;\ell_\infty^+)}=\inf\{\|a\|_{L_p}; x_n\leq a, a\in L_p(\M)\}.\label{supxn}
  \end{eqnarray}
 For   sequences of positive elements $y_n$ in $L_p(\M), 1\leq p<\infty$ such that $\sum_{n=1}^N y_n$  converges in $L_p(\M)$, let
  \begin{eqnarray}
  \|(y_n)\|_{L_p(\M;\ell_1^+)}= \left\| \sum_{n=1}^\infty y_n \right\|_{L_p(\M)}.\label{ell1xn}
  \end{eqnarray}
  Here the right hand side is an  increasing sequence. 
  G. Pisier (\cite{Pi98} and M. Junge (\cite{Ju02} proved that the expressions \eqref{supxn} and \eqref{ell1xn}  extend to   Banach space norms.\footnote{The obvious extension $\|\sum_n|x_n|\|_p$ for sequences $x_n\in L_p(\M)$ fails the triangle inequality.} Furthermore, they proved a duality result for these two  norms (see Lemma \ref{lem:duality}), which is the key for Junge's proof of the noncommutative Doob's maximal inequality.

Define $L_{p}(\mathcal{M};\ell_{\infty})$ as the space of all sequences $x=(x_{n})_{n\geq 1}$ in $L_{p}(\mathcal{M})$ which admits a factorization 
\begin{equation}\label{eq:factorization}
x_{n}=az_{n}b, \quad \forall n\geq 1,
\end{equation}
where $a,b\in L_{2p}(\mathcal{M})$ and $z=(z_{n})$ belongs to the unit ball of  $\M$.
Given $x\in L_{p}(\mathcal{M};\ell_{\infty})$, define
\begin{eqnarray}
\|x\|_{L_{p}(\mathcal{M};\ell_{\infty})}= \inf \left\{ \|a\|_{2p} \sup_{n\geq 1}\|z_n\|_{\infty} \|b\|_{2p}  \right\},\label{Lpinfty}
\end{eqnarray}
where the infimum is taken over all factorizations of $x$ as in \eqref{eq:factorization}. We denote the subset of $L_{p}(\mathcal{M};\ell_{\infty})$ consisting of all positive sequences by $L_{p}(\mathcal{M};\ell^+_{\infty})$. Next, we denote the subspace of $L_{p}(\mathcal{M};\ell_{\infty})$ consisting of all  sequences $x=(x_{n})_{n}$ such that $x_n=0$ for $n>N$ by $L_{p}(\mathcal{M};\ell^N_{\infty})$.

Define $L_{p}(\ell_{1})$ as the space of all sequences $x=(x_{n})_{n\geq 1}$ in $L_{p}(\mathcal{M})$ which admits a factorization
\begin{equation} \label{anbn}
x_{n}=a_{n}b_n, \quad \text{ for all } n\geq 1,
\end{equation}
such that the series $\sum_n a_na_n^*$ and $\sum_n b_n^*b_n$ converges  in $L_{p}(\mathcal{M}).$ Given $x\in L_{p}(\mathcal{M};\ell_{1})$, define
\begin{equation}  \label{Lpell1}
\|x\|_{L_{p}(\mathcal{M};\ell_{1})}   = \inf\left\{ \left\|\left( \sum a_{j}a_{j}^{*} \right)^{\frac{1}{2}}\right\|_{2p} \cdot \left\| \left(\sum b_{j}^{*}b_{j}\right)^{\frac{1}{2}}\right\|_{2p} \right\},
\end{equation}
where the infimum is taken over all possible factorizations \eqref{anbn}.\footnote{There are slightly different definition of the space $L_{p}(\mathcal{M};\ell_{1})$ in the literature. Here, we use the definition given in \cite{Pi10}.}  We denote the subset of $L_{p}(\mathcal{M};\ell_1)$ consisting of all positive sequences by $L_{p}(\mathcal{M};\ell^+_1)$. 
Define $L_{p}(\ell_{1}^N)$ to be the space of   all sequences $x=(x_{n})_{n\geq 1}\in L_{p}(\ell_{1})$ with $x_n=0$ for $n>N$.
G. Pisier (\cite{Pi98}) proved that  \eqref{Lpinfty} and \eqref{Lpell1} are norms extending \eqref{supxn} and \eqref{ell1xn}.


\begin{lemma}[Pisier \cite{Pi98}]\label{lem:sum}

For sequences $x_n\geq 0\in L_p(\M)$,  we have 
\begin{eqnarray}
\|(x_n )\|_{L_{p}(\mathcal{M};\ell_{1})}=\|(x_n )\|_{L_{p}(\mathcal{M};\ell_{1}^+)}\label{eq1}
\end{eqnarray}
in the sense that both sides are equally finite or both sides are infinite, and 
\begin{eqnarray}
\|(x_n )\|_{L_{p}(\mathcal{M};\ell_{\infty} )}\leq\|(x_n )\|_{L_{p}(\mathcal{M};\ell_{\infty}^+ )}\leq 4\|(x_n )\|_{L_{p}(\mathcal{M};\ell_{\infty} )}\label{eqinfty}
\end{eqnarray}
\end{lemma}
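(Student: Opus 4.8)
The plan is to prove the two assertions independently, in each case producing one explicit factorization to get an upper bound for one of the two quantities and then estimating an \emph{arbitrary} factorization to get the reverse.

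For the identity \eqref{eq1}, one direction is immediate: since each $x_n\ge 0$, the symmetric choice $a_n=b_n=x_n^{1/2}$ in \eqref{anbn} has $\sum_n a_na_n^*=\sum_n b_n^*b_n=\sum_n x_n$, which converges in $L_p(\M)$ precisely when the quantity \eqref{ell1xn} is finite, and substituting into \eqref{Lpell1} gives $\|(x_n)\|_{L_p(\M;\ell_1)}\le\bigl\|(\sum_n x_n)^{1/2}\bigr\|_{2p}^{2}=\|\sum_n x_n\|_p$. For the reverse bound, take an arbitrary factorization $x_n=a_nb_n$ with $\sum_n a_na_n^*$ and $\sum_n b_n^*b_n$ convergent in $L_p(\M)$, and form the row $A=[a_1,a_2,\dots]$ and the column $B=[b_1,b_2,\dots]^{\mathrm{t}}$, regarded as elements of $L_{2p}(\M\,\overline{\otimes}\,B(\ell_2))$ supported on the first row, respectively first column, of the amplified algebra. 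Then $\|A\|_{2p}=\bigl\|(\sum_n a_na_n^*)^{1/2}\bigr\|_{2p}$ and $\|B\|_{2p}=\bigl\|(\sum_n b_n^*b_n)^{1/2}\bigr\|_{2p}$, while $AB$ is supported on the $(1,1)$-corner and equals $\sum_n a_nb_n=\sum_n x_n$ (the limit exists by H\"older applied to the tails). H\"older's inequality in $L_p(\M\,\overline{\otimes}\,B(\ell_2))$ then gives $\|\sum_n x_n\|_p\le\|A\|_{2p}\|B\|_{2p}$, and taking the infimum over all such factorizations proves $\|(x_n)\|_{L_p(\M;\ell_1^+)}\le\|(x_n)\|_{L_p(\M;\ell_1)}$.

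For \eqref{eqinfty}, start with the left inequality. If $a\in L_p(\M)$ is positive with $x_n\le a$ for all $n$, then writing $x_n=(x_n^{1/2})^*x_n^{1/2}$ and $a=(a^{1/2})^*a^{1/2}$, a Douglas-type factorization (valid for the closed operators $x_n^{1/2},a^{1/2}$ affiliated with $\M$ once one disposes of $\ker a$ via support projections and an $a\mapsto a+\varepsilon$ approximation) produces a contraction $c_n\in\M$ with $x_n^{1/2}=a^{1/2}c_n$; hence $x_n=a^{1/2}(c_nc_n^{*})a^{1/2}$ is a factorization as in \eqref{eq:factorization} with $a^{1/2}\in L_{2p}(\M)$ and $\|c_nc_n^{*}\|_\infty\le 1$, so $\|(x_n)\|_{L_p(\M;\ell_\infty)}\le\|a^{1/2}\|_{2p}^{2}=\|a\|_p$; taking the infimum over admissible $a$ yields $\|(x_n)\|_{L_p(\M;\ell_\infty)}\le\|(x_n)\|_{L_p(\M;\ell_\infty^+)}$. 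For the right inequality, let $x_n=az_nb$ with $a,b\in L_{2p}(\M)$ and $\sup_n\|z_n\|_\infty\le 1$ as in \eqref{eq:factorization}, and fix $\lambda>0$. For $\xi$ in the appropriate domain, $\langle x_n\xi,\xi\rangle=\langle z_n(b\xi),a^{*}\xi\rangle$, so, using $\|z_n\|_\infty\le 1$, Cauchy--Schwarz and Young's inequality, $|\langle x_n\xi,\xi\rangle|\le\|b\xi\|\,\|a^{*}\xi\|\le\tfrac12\bigl(\lambda\langle aa^{*}\xi,\xi\rangle+\lambda^{-1}\langle b^{*}b\xi,\xi\rangle\bigr)$; since $x_n\ge 0$ this means $x_n\le c_\lambda:=\tfrac12(\lambda\,aa^{*}+\lambda^{-1}\,b^{*}b)\in L_p(\M)$. (Equivalently, one conjugates the positive matrix $\bigl(\begin{smallmatrix}\mathbf 1&z_n\\ z_n^{*}&\mathbf 1\end{smallmatrix}\bigr)\in M_2(\M)$ by $\mathrm{diag}(\lambda^{1/2}a,\lambda^{-1/2}b^{*})$.) Choosing $\lambda=\|b\|_{2p}/\|a\|_{2p}$ gives $\|c_\lambda\|_p\le\|a\|_{2p}\|b\|_{2p}$, and the infimum over factorizations yields $\|(x_n)\|_{L_p(\M;\ell_\infty^+)}\le\|(x_n)\|_{L_p(\M;\ell_\infty)}$, which is stronger than (hence implies) the stated bound with constant $4$.

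I expect the main obstacle to be technical rather than conceptual: once the definitions \eqref{supxn}, \eqref{ell1xn}, \eqref{Lpinfty}, \eqref{Lpell1} are unwound, everything reduces to H\"older's inequality and order estimates, but these must be carried out for elements of $L_p(\M)$ that need not be bounded operators. The delicate points are the Douglas-type factorization $x_n^{1/2}=a^{1/2}c_n$ and the resulting identity $x_n=a^{1/2}(c_nc_n^{*})a^{1/2}$ when $a^{1/2}$ is not invertible, verifying that the row $A$ and column $B$ genuinely define elements of $L_{2p}(\M\,\overline{\otimes}\,B(\ell_2))$ with the stated norms, and checking that the positive cone of $L_p(M_2(\M))\cong M_2(L_p(\M))$ interacts with the form inequalities as used above; all of these are standard facts in the theory of noncommutative $L_p$-spaces.
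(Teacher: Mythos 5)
Your proof is correct, but in both nontrivial directions it takes a genuinely different route from the paper, and in the second one it proves something sharper. For the hard direction of \eqref{eq1} the paper does not amplify to $\M\,\overline{\otimes}\,B(\ell_2)$: it uses the operator inequality $2x_n=a_nb_n+b_n^*a_n^*\le a_na_n^*+b_n^*b_n$ (from $(a_n^*-b_n)^*(a_n^*-b_n)\ge0$, using $x_n=x_n^*$) and then takes the norm of the summed inequality; your row--column H\"older argument in the amplified algebra gives the geometric-mean bound directly, which after the usual normalization by homogeneity is the same estimate, so the two are interchangeable. For the first inequality of \eqref{eqinfty} the arguments essentially coincide: the paper's explicit $z_n=(p+a^{1/2})^{-1}x_n(p+a^{1/2})^{-1}$ is your Douglas factorization written out. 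The real divergence is in the second inequality of \eqref{eqinfty}: the paper splits $z_n$ into four positive contractions and completes the square in each piece, which is exactly what produces the constant $4$, whereas you exploit the positivity of $x_n$ itself through the quadratic-form estimate $2x_n\le\lambda aa^*+\lambda^{-1}b^*b$ and, after optimizing in $\lambda$, obtain constant $1$, i.e.\ $\|(x_n)\|_{L_p(\M;\ell_\infty^+)}=\|(x_n)\|_{L_p(\M;\ell_\infty)}$ for positive sequences. That sharper statement is the one recorded in the Junge--Xu literature, and it of course implies \eqref{eqinfty}. The only point requiring care is the one you flag yourself: justifying the form inequality when $a,b\in L_{2p}(\M)$ are unbounded, which is cleanest in your $M_2(\M)$ formulation (conjugate the positive matrix $\bigl(\begin{smallmatrix}1&z_n\\ z_n^*&1\end{smallmatrix}\bigr)$ by $\mathrm{diag}(\lambda^{1/2}a,\lambda^{-1/2}b^{*})$ and then compress by the row $(1,-1)$); this is exactly parallel to the paper's own use of Bhatia's factorization in Lemma \ref{lem:holder}, so it introduces nothing beyond the paper's toolkit.
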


\begin{proof} 

We prove \eqref{eq1} first. The left hand side is obviously smaller because we may choose 
$a_n=b_n=(x_n)^\frac12$ for $x_n\geq0$. To prove that the right hand side is smaller, we assume $\|(x_n )\|_{L_{p}(\mathcal{M};\ell_{1})}=1$ and assume that there exists a factorization that $x_n=a_nb_n$ such that  
$$\|\sum_{n} a_na_n^*\|_p , \|\sum_nb_nb_n^*\|_{p}\leq 1+\varepsilon .$$ 
Then,   $2x_n=a_nb_n+b_n^*a_n^*\leq a_na_n^*+b_n^*b_n$ because 
$(a_n^*-b_n)^*(a_n^*-b_n)\geq0$. So, $\sum_{n=1}^{N}x_{n}$ converges in $L_p({\M})$ because $\sum_{n}^N a_na_n^*$ and $\sum_{n}^N b_n^*b_n$ do. Moreover, 
$$2\|\sum_{n=1}^{N}x_{n}\|_{p}\leq \|\sum_{n=1}^{N}a_na_n^*+b_nb_n^*\|_{p}\leq 2+2\varepsilon.$$
We conclude   by taking $N\rightarrow\infty,\varepsilon\rightarrow 0.$ 

For \eqref{eqinfty}, assuming $x_n\leq a\in L_p(\M)$, we denote by $p$ the projection onto the kernel of $a$. Then, $p\in \M$. Let $z_n=(p+ a^\frac12)^{-1}x_n(p+ a^\frac12)^{-1}$. Then, $z_n$ belongs to the unit ball of $\M$ and $x_n=a^\frac12z_na^\frac12$. 
We see that the first inequality holds. Next, we show the second inequality. We assume that $\|(x_n )\|_{L_{p}(\mathcal{M};\ell_{\infty} )}=1$ and $x_n$ has a factorization $x_n=az_nb$ with $\|z_n\|=1$ and $\|a\|_{2p},\|b\|_{2p}\leq 1+\varepsilon$. 
We write $z_n= \sum_{k=0}^3 i^kz_{n,k} $ with contractions $z_{n,k}\geq 0$, and consider the new decomposition $x_n=\sum_k a_kz_{n,k} b $ with $a_k=i^ka$.  Noting that $(a_k^*-b)^*z_{n,k}(a_k^*-b)\geq0$, we have
$$x_n=\frac12\sum_{k=0}^3(a_kz_{n,k}b+b^*z_{n,k}a_k^*)\leq \frac12\sum_{k=1}^3(a_k^*z_{n_k}a_k+bz_{n,k}b^*)\leq 2(a^*a+bb^*),$$
with $ \|2(a^*a+bb^*)\|_p\leq (2+2\varepsilon)^2.$
Taking $\varepsilon\rightarrow0$, we conclude \eqref{eqinfty}.
\end{proof}

The following lemma is another key to understanding the proof of the operator Hardy-Littlewood maximal inequality. 
The result was proved by G. Pisier (\cite{Pi98, Ju02}). We include an argument for the case of finite sequences below. 

\begin{lemma}[\cite{Pi98, Ju02}]\label{lem:duality}
The norms \eqref{supxn} and \eqref{ell1xn} are in duality. More precisely, for $1\leq p<\infty, \frac1p+\frac1q=1$,
\begin{itemize}
\item[(i)] For any $N$-tuple $(y_{1},\dots,y_{N})$ in $L_{p}(\mathcal{M})$ and $y_k\geq 0,  k=1,\dots,N$, we have 

\begin{equation}\label{eq:dualityNorm}
\|(y_{n})\|_{L_{p}(\mathcal{M};\ell_{1}^N)}= \sup\left\{ |\tau\left( \sum_{j=1}^N x_{j}y_{j} \right) |:  \|(x_{j})\|_{L_{q}(\mathcal{M};\ell_{\infty}^+)}\leq 1\right\}.
\end{equation}
\item[(ii)] For any  bounded sequence $(x_n)$ in $L_{q}(\mathcal{M})$ with $x_n\geq 0$, we have  
\begin{equation}\label{eq:dualityNorm2}
\|(x_{n})\|_{L_{q}(\mathcal{M};\ell_{\infty})}= \sup_N\left\{ |\tau\left( \sum_{j=1}^N x_{j}y_{j} \right) |: y_j\geq 0, \|(y_{j})\|_{L_{p}(\mathcal{M};\ell_{1}^{N})}\leq 1\right\}.
\end{equation}

\item [(iii)] $L_{q}(\mathcal{M};\ell_{\infty}^+)$  embeds isometrically into the dual space of $L_{p}(\mathcal{M};\ell_{1})$ for $1\leq p<\infty$ via the isomorphism
$$x\mapsto \varphi_x: \varphi_x(y)=\sum_n x_ny_n.$$
For any $\varphi$ in $L_{p}(\mathcal{M};\ell_{1})^*$ such that $\varphi((y_n))\geq 0$ for  finite positive sequences $(y_n)\in L_p(\M)$, there is a (unique) positive sequence $(x_{n})$ in $L_{q}(\mathcal{M})$ with
$$
\|x\|_{L_{q}(\mathcal{M};\ell_{\infty})}= \|\varphi\|_{(L_{p}(\mathcal{M};\ell_{1}))^{*}}
$$
such that for any $N\geq 1$ and any $y=(y_{1},\dots,y_{N})\in L_{p}(\mathcal{M};\ell_{1}^N)$,
$$
\varphi(y)= \tau\left( \sum_{j=1}^{N} x_{j}y_{j} \right).
$$
\end{itemize}
\end{lemma}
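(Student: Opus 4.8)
The plan is to prove (i) and (ii) simultaneously as a Hahn--Banach/bipolar type duality, and then derive (iii) as the natural consequence once the norms are identified on finite sequences. First I would fix $N$ and work inside the finite-dimensional (in the sequence index) spaces $L_p(\M;\ell_1^N)$ and $L_q(\M;\ell_\infty^N)$, using Lemma \ref{lem:sum} to reduce every statement about positive sequences to the factorized norms \eqref{Lpinfty} and \eqref{Lpell1}. The ``easy'' direction of both (i) and (ii) is the inequality $|\tau(\sum_j x_j y_j)|\le \|(x_j)\|_{L_q(\M;\ell_\infty^+)}\,\|(y_j)\|_{L_p(\M;\ell_1^+)}$: given factorizations $x_j=a z_j a$ with $0\le z_j\le 1$ (using the positive/self-adjoint form of \eqref{eq:factorization} available by \eqref{eqinfty}) and $y_j=b_j^* b_j$, I would write $\tau(\sum_j x_j y_j)=\tau(\sum_j z_j\, a y_j a)\le \tau(\sum_j a y_j a)=\tau\big(a(\sum_j y_j)a\big)$ and finish with H\"older, $\tau(a(\sum_j y_j)a)\le \|a\|_{2q}^2\,\|\sum_j y_j\|_p=\|(x_j)\|_{L_q(\M;\ell_\infty^+)}\|(y_j)\|_{L_p(\M;\ell_1^+)}$. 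This shows $\ge$ in \eqref{eq:dualityNorm} and $\ge$ in \eqref{eq:dualityNorm2} after taking the supremum.

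For the reverse (sharpness) direction of (i), I would argue by duality on $L_p(\M)$ itself. Given a positive sequence $(y_j)_{j\le N}$, by \eqref{eq1} its $L_p(\M;\ell_1^+)$-norm equals $\|\sum_j y_j\|_p$; choose a norming element $w\in L_q(\M)$, $\|w\|_q=1$, with $\tau(w\sum_j y_j)=\|\sum_j y_j\|_p$, and one may take $w\ge 0$ since $\sum_j y_j\ge 0$. Then set $x_j=w$ for all $j$; this constant sequence is dominated by $w$, so $\|(x_j)\|_{L_q(\M;\ell_\infty^+)}\le \|w\|_q=1$, and $\tau(\sum_j x_j y_j)=\|\sum_j y_j\|_p$, which gives $\le$ in \eqref{eq:dualityNorm}. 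For the reverse direction of (ii): given a bounded positive sequence $(x_n)$, take the supremum in \eqref{eq:dualityNorm2} over $N$ and over norming $(y_j)$; I expect this is where the real work lies. The strategy is to apply Hahn--Banach: the linear functional $\varphi$ on the (closure of) finite sequences in $L_p(\M;\ell_1)$ defined by $\varphi(y)=\tau(\sum_j x_j y_j)$ has norm $\le \|(x_n)\|_{L_q(\M;\ell_\infty)}$ by the easy direction, and conversely, realizing the $L_q(\M;\ell_\infty)$ norm as a supremum forces a matching test sequence. Concretely, if $x_n=a z_n a$ with $\|a\|_{2q}=\|(x_n)\|^{1/2}$ (up to $\varepsilon$) and $\sup_n\|z_n\|_\infty=1$, pick $n_0$ with $\|z_{n_0}\|$ near $1$ and a norming vector/positive element for $z_{n_0}$, then build $y$ supported at $n_0$ of the form $y_{n_0}=a^{q-1}(\cdots)a^{q-1}$ scaled to have $L_p(\M;\ell_1^N)$-norm $1$; tracking the exponents via H\"older should reproduce $\|a\|_{2q}^2$. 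I would present this for finite sequences, which is all that is claimed.

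Finally, for (iii): the isometric embedding $x\mapsto\varphi_x$ of $L_q(\M;\ell_\infty^+)$ into $L_p(\M;\ell_1)^*$ is exactly the content of (i) (boundedness with norm $\le$, and norm $\ge$ from the sharpness in (i), now read as: the $L_p(\M;\ell_1)$-norm is computed by testing against $L_q(\M;\ell_\infty^+)$). For the surjectivity-type statement, given a positive $\varphi\in L_p(\M;\ell_1)^*$, restrict to sequences supported in a single coordinate $n$: this yields a positive functional on $L_p(\M)$, hence an element $x_n\in L_q(\M)$, $x_n\ge 0$, with $\varphi((\delta_{nk}y))=\tau(x_n y)$. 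The bound $\|x_n\|_q\le\|\varphi\|$ is immediate; the point is to control $\|(x_n)\|_{L_q(\M;\ell_\infty)}$, and here I would invoke part (ii): for any $N$ and any positive $(y_j)$ with $\|(y_j)\|_{L_p(\M;\ell_1^N)}\le 1$ we have $\tau(\sum_j x_j y_j)=\varphi(y)\le\|\varphi\|$, so the supremum defining $\|(x_n)\|_{L_q(\M;\ell_\infty)}$ in \eqref{eq:dualityNorm2} is $\le\|\varphi\|$; the reverse $\ge$ is again the easy direction of (i). Uniqueness of $(x_n)$ follows from faithfulness of $\tau$. The main obstacle I anticipate is the reverse inequality in (ii) --- producing, for a given factorization $x_n=a z_n a$, an explicit nearly-optimal test sequence $(y_j)$ in $L_p(\M;\ell_1^N)$ --- since this is where one must manufacture the dual object by hand rather than invoke an abstract duality; everything else reduces to H\"older's inequality, Lemma \ref{lem:sum}, and the scalar-level $L_p$--$L_q$ duality over $\M$.
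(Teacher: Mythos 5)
Your treatment of the easy direction, your norming construction for (i) (the constant sequence built from a positive element norming $\sum_j y_j$ in $L_q(\M)$ is exactly the paper's choice $x_j=(\sum_k y_k)^{p-1}$ up to normalization), and your derivation of (iii) from (ii) by restricting $\varphi$ to single coordinates all agree in substance with the paper. But the step you yourself flag as the crux --- the reverse inequality in \eqref{eq:dualityNorm2} --- is not proved by your sketch, and the proposed construction cannot work. A test sequence supported at a single coordinate $n_0$ with $\|(y_j)\|_{L_p(\M;\ell_1^N)}=\|y_{n_0}\|_p\leq 1$ yields only $\tau(x_{n_0}y_{n_0})\leq\|x_{n_0}\|_q$, so the supremum over all such tests recovers merely $\sup_n\|x_n\|_q$, which is in general strictly smaller than $\|(x_n)\|_{L_q(\M;\ell_{\infty})}$; the whole content of that norm is the cost of a \emph{common} dominating factorization. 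Similarly, starting from a near-optimal factorization $x_n=az_na$ and norming a single $z_{n_0}$ cannot reproduce $\|a\|_{2q}^{2}=\|a^2\|_q$, since $\|az_{n_0}a\|_q$ may be far below $\|a^2\|_q$ (already in the commutative case the optimal test sequence must distribute mass over the index $n$ according to where each $x_n$ is largest, and noncommutatively there is no pointwise argmax to exploit). The paper avoids any explicit construction: it takes a Hahn--Banach norming functional for $(x_n)$, extends it to $(\ell_{\infty}(L_{p}(\M)))^{*}$, approximates it weak-$*$ by functionals induced by finite positive sequences $(y_n)\in\ell_1^N(L_p(\M))$ of norm at most one (weak-$*$ density of the predual ball), and then --- this is the key idea missing from your outline --- uses part (i) to certify that these approximants satisfy $\|(y_n)\|_{L_{p}(\M;\ell_1)}\leq 1$, not just $\sum_n\|y_n\|_p\leq 1$.

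A secondary issue: your proof of the easy inequality uses the symmetric positive factorization $x_j=az_ja$ with $0\leq z_j\leq 1$, which is only available (isometrically) for the $\ell_{\infty}^{+}$ norm. That suffices for (i), where the constraint is $\|(x_j)\|_{L_q(\M;\ell_{\infty}^{+})}\leq 1$, but the easy direction of (ii) requires the bound against the genuine $L_q(\M;\ell_{\infty})$ norm, and passing through \eqref{eqinfty} costs a factor $4$, destroying the claimed isometry. The paper's computation handles the general factorization $x_j=az_jb$ directly via Cauchy--Schwarz and H\"older, and you would need to do the same.
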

\begin{proof}
(i).	Let  $x=(x_{n})\in L_{q}(\mathcal{M};\ell_{\infty}), y=(y_{n})\in L_{p}(\mathcal{M};\ell_1)$. First, we prove that 
\begin{equation}\label{eq:duality}
\left|\sum \tau(x_{j}y_{j})\right|\leq \|(x_{j})\|_{L_{q}(\mathcal{M};\ell_{\infty})}\|y\|_{L_{p}(\mathcal{M};\ell_{1}^N)}.
\end{equation}
Consider a factorization $x_{j}=a z_{j}b$ where $a,b\in L_{2q}(\mathcal{M})$ and $(z_{j})$ belongs to the unit ball of $\M$.  
Also consider  a factorization of $y_{j}=u_{j}v_{j}$ where $u_{j},v_{j}\in L_{2p}(\mathcal{M})$.
Then, by H\"older's inequality and the Cauchy-Schwarz inequality,
\begin{align*}
\left| \sum_{j}\tau(az_jb u_{j}v_{j})\right| &=\left| \sum_{j}\tau(z_jb u_{j}v_{j}a)\right| \leq \sum_{j}\|bu_{j}v_{j}a\|_{1} \\
&\leq \sum_{j}\|bu_{j}\|_{2} \|v_{j}a\|_{2} \\
&\leq \left(\sum_{j}\|bu_{j}\|_{2}^2\right)^{\frac{1}{2}}\left( \sum_{j}\|v_{j}a\|_{2}^2 \right)^{\frac{1}{2}} \\
&=\left(\tau (b  \sum_{j}u_{j}u_{j}^*b^*)\right)^{\frac12}\left(\tau (a^*  \sum_{j}v_{j}v_{j}^*a)\right)^{\frac12} \\
&\leq \|b^*b\|^\frac12_{q}\|\sum_{j}u_{j}u_{j}^*  \|^\frac12_{p}\|aa^*\|^\frac12_{q}\|\sum_{j}v_{j}v_{j}^*  \|^\frac12_{p}\\
&\leq \|b\| _{2q}\|(\sum_{j}u_{j}u_{j}^*  )^\frac12\|_{p}\|a \| _{2q}\|(\sum_{j}v_{j}v_{j}^* )^\frac12\|_{p}
\end{align*}
Hence, we proved the one side inequality of (i) and (ii) and the first half of (iii). 

Now, suppose   $y_n\geq 0, y=(y_n)_n \in L_{p}(\mathcal{M};\ell_{1}^N)$.
Choose $x=(x_{n})_{n=1}^N$ with $x_{n}=(\sum_{k=1}^{N}y_{k})^{p-1}$ for all $1\leq n\leq N$. Note 
$$
\|(x_{n})\|_{L_{q}(\mathcal{M};\ell_{\infty})}=\left\|\left( \sum_{k}y_{k} \right)^{p-1}\right\|_{q}=\|\sum_{k=1}^N~y_{k}\|_{p}^{p-1}
$$
Thus,
$$
 \tau\left( \sum_{k}y_{k}x_{k} \right)=\tau\left( \sum_{k=1}^N~y_{k}\left( \sum_{k=1}^N~y_{k} \right)^{p-1} \right)=\|\sum_{k=1}^N~y_{k}\|_{p}.
$$
Therefore, we proved the other direction of (i).

We now prove the other direction of (ii). By the Hahn-Banach theorem, for any $(x_{n})\in L_{p}(\mathcal{M};\ell_{\infty})$, there exists $\varphi\in L_{p}(\mathcal{M};\ell_{\infty})^{*}$, such that $\|\varphi\|=1$ and $\varphi((x_{n}))=\|(x_n)\|_{L_{p}(\mathcal{M};\ell_{\infty})}$.
Since $L_{p}(\mathcal{M};\ell_{\infty})$ is a subspace of $\ell_{\infty}(L_{p}(\mathcal{M}))$,  there exists  $\tilde\varphi\in (\ell_{\infty}(L_{p}(\mathcal{M})))^*$ such that 
$\varphi(x)=\tilde\varphi(x)$. Since  the unit ball of $\ell_{1}^N(L_{p}(\mathcal{M}))$ is weak $*$-dense in the unit ball of $(\ell_{\infty}(L_{p}(\mathcal{M})))^*$ and $x_n\geq0$, we conclude that for any $\varepsilon>0$, there exists a $\varphi_\varepsilon\in (\ell_{\infty}(L_{p}(\mathcal{M})))^*$ in the form
\[
   \varphi_{\varepsilon}((x_{n}))=\sum_{n=1}\tau(x_{n}y_{n}),
\]
such that $\varphi(x)=\tilde\varphi(x)\leq \varphi_\varepsilon(x)+\varepsilon$ and   $(y_n)_n\in \ell_{1}^N(L_{p}(\mathcal{M}))$  with $y_n\geq 0$.  On the other hand,   we know  from (i) that,
\begin{eqnarray*}
\|(y_{n})\|_{L_{p}(\mathcal{M};\ell_{1})}&=&\sup_{\|x\|_{L_p(\M;\ell_\infty)}\leq1}|\tau(x_{n}y_{n})|\\
&\leq&\sup_{\|x\|_{ \ell_\infty(L_p)}\leq1}|\tau(x_{n}y_{n})|\\
&\leq &\|\varphi_\varepsilon \|= 1.
\end{eqnarray*}
We obtain
\begin{equation}
\|(x_{n})\|_{L_{p}(\mathcal{M};\ell_{\infty})}=\varphi(x)\leq \varphi_{\varepsilon}(x)+\varepsilon\leq \sup \left\{ |\tau\left( \sum x_{n}y_{n} \right) |: y_j\geq 0, \|(y_{n})\|_{L_{p^{'}}(\mathcal{M};\ell_{1}^{N})}\leq 1\right\}+\varepsilon.
\end{equation}
We then conclude (ii) by letting $\varepsilon\rightarrow0$.

We now prove the other direction of (iii). 
Note that $\ell_1(L_p(\M))$ is a sub-linear vector space of $L_{p}(\mathcal{M};\ell_{1})$ equipped with a larger norm. So, for any bounded linear functional $\varphi\in (L_{p}(\mathcal{M};\ell_{1}))^*$, its  restriction on $\ell_{1}(L_{p}(\mathcal{M}))$ defines a  bounded linear functional $\tilde\varphi\in (\ell_{1}(L_{p}(\mathcal{M})))^*=\ell_{\infty}(L_{q}(\mathcal{M}))$.  We conclude that  there exists $x_n\geq 0, (x_n)\in \ell_{\infty}(L_{q}(\mathcal{M}))$ such that
\begin{eqnarray}\label{phiyn}
   \varphi ((y_{n}))=\sum_{n=1}^\infty\tau(x_{n}y_{n}),
\end{eqnarray}
 for all $(y_n)\in \ell_1(L_p(\M))\subset L_{p}(\mathcal{M};\ell_{1})$. In particular, the expression \eqref{phiyn} holds  for any finite sequences $(y_n)\in L_{p}(\mathcal{M};\ell_{1})$.   By (ii), we have
 \begin{eqnarray*}
  \|(x_n)\|_{L_{q}(\mathcal{M} ;\ell_{\infty})}  &=&\sup  \left\{\tau \left(\sum_{n=1}x_{n}y_{n} \right); {\rm finite\ sequences} (y_n), \|y_n\| _{ L_{p}(\mathcal{M};\ell_{1})}\leq1\right\}\\
 & =&\sup\left\{ \varphi((y_n)); {\rm finite\ sequences} \ (y_n), \|y_n\| _{ L_{p}(\mathcal{M};\ell_{1})}\leq1\right\}\leq \|\varphi\|_{(L_{p}(\mathcal{M};\ell_{1}))^*}.
\end{eqnarray*}
\end{proof}

\section{Operator-Maximal Inequality}
Let $\M$ be a semifinite von Neumann algebra, e.g. $ L_\infty(\Omega)$ or $B(H)$. Let $L_p(\M)$ be the associated noncommutative $L_p$ spaces, e.g. $L_p(\Omega)$ or the Schatten classes $S_p$.
Let  $L_p(\R,\M), 1\leq p<\infty$ be the space of all $L_p(\M)$-valued Bochner-measurable functions $f$ on the real line such that 
$$\|f\|_{L^p(\R,L_p(\M))}= \left(\int_\R \|f(x)\|_p^p dx \right)^\frac1p= \left(\int_\R \tau \left[ |f(x)|^p \right] dx \right)^\frac1p<\infty.$$ 
We prove the following operator Hardy-Littlewood maximal inequality for $f\in L_p(\R,L_p(\M))$ where $2\leq p<\infty$. The corresponding result for $p=\infty$ is trivial. 
\begin{theorem}\label{thm:main}

	Given $f \in L_p( \R, L_p(\M))$  for some $2\leq  p<\infty$,   there exists a   $L_p(\M)$-valued Bochner-measurable function $F$  such that 

(i) $\frac1{2t}\int_{x-t}^{x+t}|f(y)|dy\leq F(x)$ as operators for all $t>0$, i.e. $F-\frac1{2t}\int_{x-t}^{x+t}|f(y)|dy\geq0  $ almost everywhere.

(ii) There exists an absolute constant $c $ such that  \begin{eqnarray}
\|F\|_{L^p(\R,L_p(\M))}\leq c\|f\|_{L^p(\R,L_p(\M))}.\label{HM5}
\end{eqnarray}

\end{theorem}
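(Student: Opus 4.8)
The plan is to reduce the continuous maximal operator to a dyadic one and then dualize via Lemma~\ref{lem:duality}. First I would discretize: for each $t>0$ the average $A_t f(x)=\frac1{2t}\int_{x-t}^{x+t}|f(y)|\,dy$ can be dominated (up to an absolute constant, as operators, using positivity of $|f|$) by an average over a dyadic interval containing $I(x,t)$, so it suffices to produce a single $F$ dominating all the dyadic averages $E_n(|f|)$, where $E_n$ is the conditional expectation onto the $\sigma$-algebra generated by intervals of length $2^{-n}$. Crucially $E_n$ acts only on the $\R$-variable and is positive and trace-preserving on $L_p(\R,L_p(\M))$, so $(E_n|f|)_n$ is a sequence of positive elements of $L_p(\R\,\bar\otimes\,\M)=L_p(\N)$ for the semifinite algebra $\N=L_\infty(\R)\bar\otimes\M$. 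Then $\|F\|_{L_p(\R,L_p(\M))}\le c\|f\|_{L_p}$ with $F$ dominating all $E_n|f|$ is exactly the statement that $\|(E_n|f|)_n\|_{L_p(\N;\ell_\infty^+)}\le c\|f\|_p$, and by \eqref{eqinfty} it is enough to bound the $L_p(\N;\ell_\infty)$ norm.

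The heart of the argument is then the dual formulation. By Lemma~\ref{lem:duality}(ii) (or its finite-$N$ version, after a further truncation to finitely many scales and a monotone-convergence passage to the limit),
\[
\|(E_n|f|)_n\|_{L_p(\N;\ell_\infty)}=\sup\Big\{\,\Big|\tau\Big(\sum_{j=1}^N (E_j|f|)\,y_j\Big)\Big|\ :\ y_j\ge0,\ \|(y_j)\|_{L_q(\N;\ell_1^N)}\le1\,\Big\},
\]
where $\tfrac1p+\tfrac1q=1$ and here $2\le p<\infty$ so $1<q\le2$. Using $\tau(E_j(|f|)\,y_j)=\tau(|f|\,E_j y_j)$ (self-adjointness of the conditional expectation) and factoring $y_j=u_j v_j$ along an optimal $\ell_1$-factorization, I would bound $|\tau(\sum_j |f| E_j y_j)|\le \big\||f|\big\|_p\,\big\|\sum_j E_j y_j\big\|_q$ by H\"older, and then reduce to showing
\[
\Big\|\sum_{j=1}^N E_j y_j\Big\|_{L_q(\N)}\le c\,\big\|(y_j)\big\|_{L_q(\N;\ell_1^N)} .
\]
This is precisely the \emph{dual Doob inequality} for the (commutative-in-the-$\R$-variable) filtration $(E_n)$ at the exponent $q\in(1,2]$, i.e. $\big\|\sum_j E_j y_j\big\|_q\le c\,\big\|\sum_j y_j\big\|_q$ for positive $y_j$. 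Since the $E_n$ here are genuine conditional expectations whose ranges are nested, this dual statement can be proved by hand at $q=2$ (where it follows from the Cotlar/almost-orthogonality estimate $\|E_jE_k\|\le 2^{-|j-k|/2}$ on $L_2$, giving an absolute constant via the Cotlar--Stein lemma) and at the endpoint $q=1$ it is trivial since $\|\sum_j E_j y_j\|_1=\sum_j\|y_j\|_1$ by trace-preservation; interpolating gives the range $1<q<2$ with an absolute constant. Tracking the constant from interpolation yields a bound of the form $c\,\tfrac{q}{q-1}=c\,p$; if one wants the clean $c$ in \eqref{HM5} one can absorb this, since the statement only claims an absolute constant after the reduction and the comparison between continuous and dyadic averages.

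The main obstacle, and the step I would spend the most care on, is the passage through the dual norm $L_q(\N;\ell_1)$: one must check that the supremum in Lemma~\ref{lem:duality}(ii) may be restricted to $y_j$ of the special form $E_j(\cdot)$-invariant or at least that $\tau(\sum_j |f| E_j y_j)$ can be handled uniformly, and that the truncation to finite $N$ together with the monotone limit $N\to\infty$ is legitimate (here \eqref{ell1xn} and the lower semicontinuity of $\tau$ do the work, since $\sum_{j=1}^N E_j y_j$ is increasing when the $y_j$ are positive). A secondary technical point is the measurability and construction of $F$ itself as an honest $L_p(\M)$-valued Bochner function dominating every continuous average $A_t f(x)$ simultaneously for a.e.\ $x$; this follows by taking $F$ to be (a constant multiple of) the element realizing the infimum in \eqref{supxn}, using that the dyadic averages already dominate the continuous ones pointwise in the operator order after the discretization in the first paragraph.
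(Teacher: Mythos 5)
Your overall architecture coincides with the paper's: dominate the continuous averages by a discrete family of positive self-adjoint averaging operators, bound the $L_p(\mathcal{N};\ell_\infty)$ norm of that family by dualizing through Lemma~\ref{lem:duality}(ii) together with Lemma~\ref{lem:sum}, and reduce everything to the dual inequality $\|\sum_j A_j y_j\|_q\le c\|\sum_j y_j\|_q$ for positive $y_j$ and $1<q\le 2$, proved at $q=2$ by hand and extended downward by interpolation against the trivial $q=1$ endpoint. However, the step where you actually prove the $q=2$ case is wrong. For nested conditional expectations one has $E_jE_k=E_{\min(j,k)}$, which has norm one, so the claimed almost-orthogonality $\|E_jE_k\|\le 2^{-|j-k|/2}$ is false and Cotlar--Stein is inapplicable (the decay you have in mind holds for the martingale differences $E_j-E_{j-1}$, not for the $E_j$ themselves); moreover Cotlar--Stein would only bound the single operator $\sum_j E_j$, whereas you need the map $(y_j)\mapsto\sum_j E_jy_j$ on $\ell_1$-valued data. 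The correct $q=2$ argument is an elementary positivity computation: expand $\|\sum_j E_jy_j\|_2^2=\sum_{j,k}\tau(E_jy_j\,E_ky_k)$, use $E_kE_j=E_{j\wedge k}$ together with $\tau(ab)\ge 0$ for $a,b\ge0$ to dominate the double sum by $2\,\tau\bigl((\sum_j E_jy_j)(\sum_k y_k)\bigr)$, and finish with Cauchy--Schwarz. This is exactly what the paper's Lemma~\ref{lemma3.1} does for the centered averages $T_n$, where exact nestedness is unavailable and is replaced by $T_nT_m\le 2T_{m+1}$ and a supremum over rearrangements.

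Two further points need attention. First, a single dyadic filtration does not dominate all centered averages: an interval $I(x,t)$ straddling a dyadic boundary need not be contained in any dyadic interval of comparable length, so you must either use two or three shifted dyadic grids (and sum the resulting majorants) or, as the paper does, work directly with the centered operators $T_nf(x)=\frac1{2^{n+1}}\int_{x-2^n}^{x+2^n}f$, which dominate $\frac1{2t}\int_{x-t}^{x+t}|f|$ with constant $2$ for $2^{n-1}\le t<2^n$. Second, ``interpolating'' the dual Doob inequality between $q=1$ and $q=2$ is not a one-liner, because the relevant objects are the positive cones of $L_q(\mathcal{N};\ell_1)$ with norm $\|\sum_j y_j\|_q$; the paper's Lemma~\ref{lemma:p12} handles precisely this with an explicit three-line argument built on the factorization $g_n=g\,h_n\,g$. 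Note also that interpolating two bounded endpoint constants ($1$ at $q=1$, an absolute constant at $q=2$) yields a uniformly bounded constant, so there is no $q/(q-1)$ loss to ``absorb''; if the constant really were $cp$ as you suggest, the theorem's claim of an absolute constant on all of $2\le p<\infty$ would fail rather than follow.
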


In order to prove main theorem, we prove the dual form of Theorem \ref{thm:main}. Let $\mathcal{N}$ be the von Neumann algebra tensor product $L_{\infty}(\mathbb{R})\otimes \mathcal{M}$ equipped with the semifinite trace  $\nu=\int\otimes \tau$. Then, $L_p(\R,\M)$ coincides with the noncommutative $L_p$ spaces $L_p({\N})$ associated with the pair $({\N},\nu)$ for $1\leq p<\infty$. Let $T_n, n>0$ be the averaging operator on $L_p(\R)\otimes L_p(\M)$ defined by
\begin{eqnarray*}
(T_{n}f)(x)=\frac{1}{2^{n+1}}\int_{x-2^n}^{x+2^n}f(t)  \, dt
\end{eqnarray*}
It is easy to verify that $ \{T_{n}\}_{n\in\mathbb{Z}}$ is a family of operators from $L_{2}(\mathcal{N})\to L_{2}(\mathcal{N})$ satisfying 
\begin{itemize}
  \item	$T_{n}=T_{n}^*$;
   \item $T_{n}g \geq 0$ if $g\geq 0$;
   \item   $T_{n}T_{m}\leq 2T_{\sigma(m)}$ for $\sigma(m)=m+1$ and any $n\leq m$.
\end{itemize}

\begin{lemma}\label{lemma3.1} For any finite sequence $g_{n}\in L_{2}(\mathcal{N})$ with all $g_n\geq 0$, we have
\begin{equation}\label{lem:Stein}
\|(T_{n}g_{n})\|_{L_{2}(\mathcal{N};\ell_1)}\leq 4\|(g_{n})\|_{L_{2}(\mathcal{N};\ell_1)}.
\end{equation}
\end{lemma}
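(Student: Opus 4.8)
The plan is to dualize the desired inequality using Lemma \ref{lem:duality}. By part (i) of that lemma, establishing \eqref{lem:Stein} is equivalent to showing that for every finite positive sequence $(x_n)$ in the unit ball of $L_{2}(\mathcal{N};\ell_{\infty}^+)$ one has
\[
\left| \nu\left( \sum_{n} x_n T_n g_n \right) \right| \leq 4 \|(g_n)\|_{L_2(\mathcal{N};\ell_1)} \cdot \sup_{(x_n)} 1,
\]
and since $(g_n)$ is a fixed positive sequence, it suffices to bound $\nu(\sum_n x_n T_n g_n)$ by $4$ whenever $\|(x_n)\|_{L_2(\mathcal{N};\ell_\infty^+)}\leq 1$ and $\|(g_n)\|_{L_2(\mathcal{N};\ell_1)}\leq 1$. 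Using self-adjointness $T_n = T_n^*$, we rewrite $\nu(x_n T_n g_n) = \nu((T_n x_n) g_n)$, so the left side becomes $\nu(\sum_n (T_n x_n) g_n)$. Now I would invoke the other direction: by part (i) of Lemma \ref{lem:duality} applied with the roles of the sequences as they stand (or directly by the one-sided inequality \eqref{eq:duality}), this is at most $\|(T_n x_n)\|_{L_2(\mathcal{N};\ell_\infty^+)} \cdot \|(g_n)\|_{L_2(\mathcal{N};\ell_1)}$. So everything reduces to the claim that $T$ is bounded on $L_2(\mathcal{N};\ell_\infty^+)$ with constant at most $4$, i.e.
\[
\|(T_n x_n)\|_{L_2(\mathcal{N};\ell_\infty^+)} \leq 4\,\|(x_n)\|_{L_2(\mathcal{N};\ell_\infty^+)}.
\]

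To prove this, I would use the definition \eqref{supxn} of the $\ell_\infty^+$ norm directly, exploiting the positivity-preservation of $T_n$. Suppose $x_n \leq a$ for all $n$ with $a \in L_2(\mathcal{N})$, $\|a\|_2 \leq \|(x_n)\|_{L_2(\mathcal{N};\ell_\infty^+)} + \varepsilon$. Then for any fixed $n$ and any $m \geq n$, monotonicity of $T_m$ gives $T_m x_n \leq T_m a$. But I want a single dominating element independent of $n$. The natural candidate is $b = \sum_{m} T_{\sigma(m)} a$ or something comparable, but that sum need not converge. Here is where the third structural property $T_n T_m \leq 2 T_{\sigma(m)}$ for $n \leq m$ should be used — except that in \eqref{lem:Stein} the indices of $T$ and of $x$ are tied together ($T_n$ acts on $x_n$), so the relevant estimate is to dominate $T_n x_n$ uniformly in $n$ by something of controlled $L_2$-norm. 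The cleanest route: since $x_n \leq a$, we get $T_n x_n \leq T_n a$; the issue is that $\sup_n \|T_n a\|_2$ is controlled (each $T_n$ is an $L_2$-contraction, in fact averaging operators are contractions on $L_2$), but we need $T_n a \leq (\text{fixed element})$, not merely bounded norms. I expect the actual argument to dominate $T_n a$ by $T_n a + T_{n+1} a + \dots$ truncated appropriately, or to use a dyadic/lacunary trick: because the averaging radii are dyadic, $T_n a$ for the finitely many relevant $n$ can be controlled by a geometric-type sum, and the factor $4$ (rather than $2$) comes precisely from the constant $2$ in $T_nT_m \leq 2T_{\sigma(m)}$ together with a $\sum 2^{-k}$ summation.

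Thus the heart of the matter, and the step I expect to be the main obstacle, is proving the $L_2(\mathcal{N};\ell_\infty^+)$-boundedness of the family $(T_n)$ with an absolute constant. The subtlety is genuinely operator-theoretic: unlike the scalar case where one would just take a pointwise supremum, here one must produce an \emph{explicit} dominating operator $b \in L_2(\mathcal{N})$ with $T_n x_n \leq b$ for all $n$ and $\|b\|_2 \lesssim \|a\|_2$, and the construction must exploit all three listed properties of $\{T_n\}$ — self-adjointness (used in the dualization above), positivity preservation (to push the inequality $x_n \leq a$ through $T_n$), and the semigroup-type domination $T_nT_m \leq 2T_{\sigma(m)}$ (to telescope the family of dominating elements into one convergent expression). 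Once that boundedness is in hand, combining it with the duality of Lemma \ref{lem:duality} as sketched closes the proof of \eqref{lem:Stein}, with the constant $4$ tracked through the two applications of the structural inequality and the one application of \eqref{eqinfty} if the $\ell_\infty$ versus $\ell_\infty^+$ discrepancy needs to be absorbed.
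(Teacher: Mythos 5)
Your reduction via Lemma \ref{lem:duality}(i) and the self-adjointness of $T_n$ is legitimate as far as it goes, but it converts the problem into proving
\begin{equation*}
\|(T_n x_n)\|_{L_2(\mathcal{N};\ell_\infty^+)} \;\leq\; 4\,\|(x_n)\|_{L_2(\mathcal{N};\ell_\infty^+)},
\end{equation*}
and you never prove this --- you explicitly flag it as ``the main obstacle'' and only say you \emph{expect} a dyadic/telescoping construction to work. This is not a minor omission: the $L_2(\ell_\infty^+)$-boundedness of the family $(T_n)$, applied to the constant sequence $x_n=|f|$, \emph{is} the operator Hardy--Littlewood maximal inequality for $p=2$ (Theorem \ref{thm:main}), which is precisely what Lemma \ref{lemma3.1} is introduced in order to prove by duality. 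Your argument therefore dualizes the dual formulation back into (a generalization of) the original theorem, and is circular unless you supply an independent proof of the $\ell_\infty^+$ bound. The ideas you sketch for producing a single dominating operator $b$ with $T_n a\leq b$ for all $n$ do not close the gap: $\sum_m T_{\sigma(m)}a$ does not converge, and the hypothesis $T_nT_m\leq 2T_{\sigma(m)}$ concerns compositions of \emph{two} averaging operators, so it cannot by itself dominate a single $T_n a$. Constructing such a $b$ of controlled norm is exactly the hard content of the theorem; the known ways to do it (Doob's inequality, maximal ergodic theorems) are the indirect routes the paper is written to avoid.

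The paper sidesteps all of this by proving the $\ell_1$ statement directly in $L_2$, with no duality. Since all $g_n\geq 0$, Lemma \ref{lem:sum} identifies $\|(h_n)\|_{L_2(\mathcal{N};\ell_1)}$ with $\|\sum_n h_n\|_2$ for positive sequences, so the claim becomes $\|\sum_n T_ng_n\|_2\leq 4\|\sum_n g_n\|_2$. This is established by expanding $\|\sum_n T_n g_{\alpha(n)}\|_2^2$ (for a bijection $\alpha$ of $\Z$) as a double sum, symmetrizing with the tracial property to reduce to the region $n\leq m$, applying $T_nT_m\leq 2T_{\sigma(m)}$ and positivity to bound the whole expression by $4\,\nu\bigl(\bigl(\sum_n g_{\alpha(n)}\bigr)\bigl(\sum_m T_{\sigma(m)}g_{\alpha(m)}\bigr)\bigr)$, and then using Cauchy--Schwarz together with a supremum over all bijections $\alpha$ to absorb the index shift $\sigma^{-1}\alpha$ and divide out the resulting factor. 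If you wish to salvage your outline, you must either adopt such a direct quadratic-form argument or give a genuinely independent proof of the $L_2(\mathcal{N};\ell_\infty^+)$ boundedness of $(T_n)$; as written, the proposal assumes what is to be proved.
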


\begin{proof}
Given a positive sequence $(g_n)_n\in L_2({\N})$ with only  finitely many non-zero terms and     a bijection $\alpha$ on $\Z$, we have that for $\sigma(m)=m+1$,
\begin{align*}
\left \|\sum_{n} T_{n}g_{\alpha(n)} \right\|_{L_{2}(\mathcal{N})}^{2}&= \nu\left( \sum_{n,m}T_{n}g_{\alpha(n)}T_{m}g_{\alpha(m)} \right)\\
&=\nu\left( \sum_{n< m}T_{n}g_{\alpha(n)}T_{m}g_{\alpha(m)} \right)+ \nu\left( \sum_{n\geq m}T_{n}g_{\alpha(n)} T_{m}g_{\alpha(m)}\right) \\
({\rm by\ the\ tracial\ property\ of}\  \nu)&=\nu\left( \sum_{n< m}T_{n}g_{\alpha(n)}T_{m}g_{\alpha(m)} \right)+ \nu\left( \sum_{n\geq m} T_{m}g_{\alpha(m)}T_{n}g_{\alpha(n)}\right) \\
&=\nu\left( \sum_{n< m}T_{n}g_{\alpha(n)}T_{m}g_{\alpha(m)} \right)+ \nu\left( \sum_{n\leq m} T_{n}g_{\alpha(n)}T_{m}g_{\alpha(m)}\right) 
\end{align*}
Note that  $\nu\left( \sum_{n}T_{n}g_{\alpha(n)}T_{n}g_{\alpha(n)} \right)\geq0$. So, we have that
\begin{align*} 
\left\| \sum_{n} T_{n}g_{\alpha(n)} \right\|_{L_{2}(\mathcal{N})}^{2}
&\leq 2\nu\left( \sum_{n\leq m}T_{n}g_{\alpha(n)}T_{m}g_{\alpha(m)} \right)\\&=2\nu\left( \sum_{n\leq m} g_{\alpha(n)}T_{n}T_{m}g_{\alpha(m)} \right) \\
&\leq 4~\nu\left( \sum_{n\leq m}g_{\alpha(n)}T_{\sigma(m)}g_{\alpha(m)} \right).
\end{align*} 
By the tracial property of $\nu$, we have that $\nu (ab)=\nu( b^\frac12ab^\frac12)\geq0$ for any $a,b\geq0$. So
\begin{align*} 
\left\| \sum_{n} T_{n}g_{\alpha(n)} \right\|_{L_{2}(\mathcal{N})}^{2}&\leq 4~\nu\left( \sum_{n,m}g_{\alpha(n)}T_{\sigma(m)}g_{\alpha(m)} \right) \\  &\leq4~\nu\left( \left( \sum_{n}g_{\alpha(n)} \right)\left( \sum_{m}T_{\sigma(m)}g_{\alpha(m)} \right) \right)\\
&=4~\nu\left( \left( \sum_{n}g_n) \right)\left( \sum_{m}T_mg_{\sigma^{-1}\alpha(m)} \right) \right)\\
&\leq 4~ \left\|\sum_{n}g_{n}\right\|_{L_{2}(\mathcal{N})}~\left\|\sum_{m}T_{m}g_{\sigma^{-1}\alpha(m)}\right\|_{L_{2}(\mathcal{N})}.
\end{align*}
Now, taking the supremum over all bijections $\alpha$ on both sides, we get 

\begin{align*} 
\sup_\alpha \left\| \sum_{n} T_{n}g_{\alpha(n)} \right\|_{L_{2}(\mathcal{N})}^{2} &\leq 4 \left\| \sum_{n}g_{n} \right\|_{L_{2}(\mathcal{N})}\sup_\alpha \left\| \sum_{m}T_{m}g_{\sigma^{-1}\alpha(m)} \right\|_{L_{2}(\mathcal{N})}\\
&= 4 \left\| \sum_{n}g_{n} \right\|_{L_{2}(\mathcal{N})}\sup_\alpha \left\| \sum_{m}T_{m}g_{\alpha(m)} \right\|_{L_{2}(\mathcal{N})}
\end{align*}
By dividing the finite number $\sup_\alpha\|\sum_{m}T_{m}g_{\alpha(m)}\|_{L_{2}(\mathcal{N})}$ on both sides, we get \eqref{lem:Stein}.
\end{proof}

Note that $ \{T_{n}\}_{n\in\mathbb{Z}}$ is a family of positive-preserving contractions on $L_{1}(\mathcal{N})$. Lemma \ref{lemma3.1} holds trivially if we replace $L_2({\N})$ with $L_1({\N})$. We  show that this remains true if we replace $L_2({\N})$ with $L_p({\N})$ for all $1<p<2$.
We need the following Cauchy-Schwartz inequality. We include a proof for completeness.

\begin{lemma}\label{lem:holder}
 Suppose $a_n\in L_q(\mathcal{N},\nu), b_n\in L_r(\mathcal{N},\nu)$. Then,
  we have
  \begin{equation}
    \left\|\sum_{n=1}^{N} T_n(a_n^* b_n)\right\|_p \leq \left\|\sum_{n=1}^{N}T_n(a_n^* a_n)\right\|_q~\left\|\sum_{n=1}^{N}T_n(b_n^* b_n)\right\|_r.
  \end{equation}
  In particular, 
  \begin{equation}
    \left\|\sum_{n=1}^{N} T_n(a_n^* b_n)\right\|_p \leq \left\|\sum_{n=1}^{N}T_n(a_n^* a_n)\right\|_p~\left\|\sum_{n=1}^{N}T_n(b_n^* b_n)\right\|_p.
  \end{equation}
\end{lemma}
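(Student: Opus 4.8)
The plan is to exploit a property of the operators $T_n$ stronger than mere positivity: each $T_n$ is a \emph{convex integral of $\nu$-preserving $*$-automorphisms} of $\mathcal N$. For $u\in\R$ let $\alpha_u=\sigma_u\otimes\mathrm{id}_{\mathcal M}$, where $\sigma_u$ is translation by $u$ on $L_\infty(\R)$; since Lebesgue measure is translation invariant, $\alpha_u$ is a trace-preserving $*$-automorphism of $\mathcal N$, hence an isometry on every $L_s(\mathcal N)$ with $s<\infty$, and $u\mapsto\alpha_u(w)$ is norm-continuous there. A change of variable gives $T_n=\int_\R\alpha_u\,d\mu_n(u)$, where $\mu_n$ is the uniform probability measure on $[-2^n,2^n]$; since $\alpha_u$ is multiplicative and $*$-preserving,
\[
T_n(c^{*}d)=\int_\R\alpha_u(c)^{*}\alpha_u(d)\,d\mu_n(u)\qquad\text{for all }c,d ,
\]
and in particular $T_n(c^{*}c)=\int_\R\alpha_u(c^{*}c)\,d\mu_n(u)\ge 0$. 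This identity is precisely what turns the left-hand side of the lemma into a genuine sum of elementary products $c^{*}d$, to which an operator Cauchy--Schwarz inequality can be applied.

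Next I would discretize to reduce to a finite sum. Replace $\mu_n$ by a Riemann-sum average $\mu_n^{(K)}=\frac1K\sum_{k=1}^{K}\delta_{u_{k,n}}$ over $[-2^n,2^n]$ and put $T_n^{(K)}=\frac1K\sum_k\alpha_{u_{k,n}}$; by norm-continuity and isometry of the $\alpha_u$ we have $T_n^{(K)}(w)\to T_n(w)$ in $L_s(\mathcal N)$, so it suffices to prove the inequality with $T_n^{(K)}$ in place of $T_n$ and let $K\to\infty$. Writing $c_{n,k}=K^{-1/2}\alpha_{u_{k,n}}(a_n)$ and $d_{n,k}=K^{-1/2}\alpha_{u_{k,n}}(b_n)$, we get $\sum_{n\le N}T_n^{(K)}(a_n^{*}b_n)=\sum_{n,k}c_{n,k}^{*}d_{n,k}$, and likewise $\sum_n T_n^{(K)}(a_n^{*}a_n)=\sum_{n,k}c_{n,k}^{*}c_{n,k}$ and $\sum_n T_n^{(K)}(b_n^{*}b_n)=\sum_{n,k}d_{n,k}^{*}d_{n,k}$. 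Now apply the finite operator Cauchy--Schwarz inequality
\[
\Bigl\|\sum_i c_i^{*}d_i\Bigr\|_p\ \le\ \Bigl\|\sum_i c_i^{*}c_i\Bigr\|_s^{1/2}\,\Bigl\|\sum_i d_i^{*}d_i\Bigr\|_t^{1/2},\qquad\tfrac1s+\tfrac1t=\tfrac2p ,
\]
valid when $a_n\in L_{2s}(\mathcal N)$, $b_n\in L_{2t}(\mathcal N)$; the case $s=t=p$ gives the ``in particular'' assertion. The finite inequality is itself just the noncommutative H\"older inequality of the Preliminaries after amplification by a matrix algebra: arrange $(c_i)$ into the first column $\widehat C$ and $(d_i)$ into the first column $\widehat D$ of matrices in $L_{2s}(\mathbb M_m\bar\otimes\mathcal N)$ and $L_{2t}(\mathbb M_m\bar\otimes\mathcal N)$, so that $\widehat C^{*}\widehat D=e_{11}\otimes\sum_i c_i^{*}d_i$, $\widehat C^{*}\widehat C=e_{11}\otimes\sum_i c_i^{*}c_i$, hence $\|\widehat C\|_{2s}=\|\sum_i c_i^{*}c_i\|_s^{1/2}$ and $\|\widehat C^{*}\widehat D\|_p\le\|\widehat C\|_{2s}\|\widehat D\|_{2t}$.

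The step that really carries the proof is the first one. Recognizing $T_n$ as an integral of $*$-automorphisms is exactly what lets $T_n(a^{*}b)$ factor as $\int\alpha_u(a)^{*}\alpha_u(b)\,d\mu_n(u)$; without it $T_n$ is only completely positive, and the natural operator inequality available then (Kadison--Schwarz) would control $T_n(a)^{*}T_n(b)$ rather than the combination $\sum_n T_n(a_n^{*}b_n)$ that appears here. The remaining ingredients — the change of variable $T_n=\int\alpha_u\,d\mu_n$, the isometry and strong continuity of $u\mapsto\alpha_u$ on the relevant $L_s(\mathcal N)$, the passage $K\to\infty$ using continuity of the norm, and the matrix-amplification proof of the finite Cauchy--Schwarz inequality — are all routine, and are where a ``proof for completeness'' would spend its effort.
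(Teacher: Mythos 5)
Your proof is correct (up to the same exponent bookkeeping --- the square roots that belong on the right-hand side --- that the paper's own statement of the lemma also elides), but it takes a genuinely different route. The paper never uses the specific form of $T_n$: it sets $X_n=\left(\begin{smallmatrix} a_n & b_n\\ 0 & 0\end{smallmatrix}\right)$, notes $X_n^*X_n\ge 0$, applies $T_n$ entrywise and sums over $n$ to obtain a positive $2\times 2$ operator matrix $\left(\begin{smallmatrix}\alpha & \gamma\\ \gamma^* & \beta\end{smallmatrix}\right)$, then invokes the factorization $\gamma=\alpha^{1/2}y\beta^{1/2}$ with $y$ a contraction (\cite[Prop.~1.3.2]{Br07}) and finishes with H\"older; the only property of $T_n$ needed is that applying it entrywise preserves positivity of such $2\times2$ matrices, i.e.\ $2$-positivity. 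You instead exploit the stronger fact that $T_n$ is an average of trace-preserving $*$-automorphisms, which (after a Riemann-sum discretization and a limit in $K$) turns $\sum_n T_n(a_n^*b_n)$ into a literal finite sum $\sum_i c_i^*d_i$, to which the column-matrix Cauchy--Schwarz inequality applies. Both arguments are sound; the paper's is shorter, needs no approximation step, and is more general (it works for any completely positive $T_n$), whereas yours is tied to the averaging structure of these particular operators but avoids the block-matrix factorization lemma. Your closing claim that recognizing $T_n$ as an integral of automorphisms is what ``really carries the proof,'' because complete positivity alone would only yield a Kadison--Schwarz-type bound on $T_n(a)^*T_n(b)$, is therefore overstated: the paper's $2\times2$ matrix argument shows that (complete) positivity alone does suffice for exactly the inequality in question.
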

\begin{proof}
  Let $X_n=\begin{pmatrix}
    a_n & b_n\\
    0 & 0
  \end{pmatrix}$. Then we have 
  \[
  X_n^* X_n= \begin{pmatrix}
    a_n^* a_n & a_n^* b_n\\
    b_n^* a_n & b_n^* b_n
  \end{pmatrix}.
  \]This implies that \[
   \begin{pmatrix}
    \sum_{n=1}^N T_n(a_n^* a_n)   & \sum_{n=1}^{N} T_n(a_n^* b_n) \\
    \sum_{n=1}^N T_n(b^* a_n)    & \sum_{n=1}^N T_n(b_n^* b_n)
  \end{pmatrix}=  \begin{pmatrix}
      \alpha & \gamma\\
      \gamma^* & \beta
    \end{pmatrix}\geq 0.
  \]
 Then, by \cite[Prop. 1.3.2]{Br07}, there exists a contraction $y$ such that $\gamma=\alpha^{\frac{1}{2}} y \beta^{\frac{1}{2}}$. 
  Thus by H\"older's inequality,
  \[
  \left\| \sum_{n=1}^N T_n(a_n^* b_n) \right\|_p = \|\gamma\|_p= \|\alpha^{\frac{1}{2}} y \beta^{\frac{1}{2}}\|_p\leq \|\alpha^{\frac12}\|_q \|\beta^{\frac12}\|_r.
  \]
  The lemma is proved.
\end{proof}

\begin{lemma}\label{lemma:p12}Under the same assumption of Lemma \ref{lemma3.1}, we have that,
\begin{equation}
  \|(T_n g_n)\|_{L_p(\mathcal{N};\ell_1)}\leq 4^{2-\frac{2}{p}} \|(g_n)\|_{L_p(\mathcal{N};\ell_1)}.
\end{equation}
for all finite sequences $(g_n)\in L_p(\mathcal{N};\ell_1)$, $g_n\geq 0$, $1\leq p\leq 2$.
\end{lemma}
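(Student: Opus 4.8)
The plan is to deduce the bound by complex interpolation of the linear operator $S\colon(g_n)\mapsto(T_ng_n)$ on the scale $L_p(\mathcal{N};\ell_1)$ between the two endpoints $p=1$ and $p=2$, where $S$ is already under control. At $p=2$ the required estimate for positive sequences is exactly Lemma \ref{lemma3.1}, with constant $4$. At $p=1$ it is elementary: each $T_n$ is a positive contraction on $L_1(\mathcal{N})$, so for a finite positive sequence $\|\sum_nT_ng_n\|_1=\sum_n\|T_ng_n\|_1\le\sum_n\|g_n\|_1=\|\sum_ng_n\|_1$, and by Lemma \ref{lem:sum}, eq. \eqref{eq1}, this says $\|(T_ng_n)\|_{L_1(\mathcal{N};\ell_1)}\le\|(g_n)\|_{L_1(\mathcal{N};\ell_1)}$. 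Since $4^{2-2/p}$ equals $1$ at $p=1$ and $4$ at $p=2$, one expects the interpolated constant to be exactly $4^{2-2/p}$.

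Before interpolating I would upgrade these two endpoint bounds from positive sequences to arbitrary finite sequences, so that they apply to the single operator $S$ on the full spaces. Given a factorization $g_n=a_nb_n$, the operator matrix built from $\binom{a_n}{b_n^*}\bigl(a_n^*\ b_n\bigr)\ge0$ is positive, and applying the positivity‑preserving map $T_n$ (which is completely positive, being convolution by a probability density tensored with $\mathrm{id}_{\mathcal{M}}$, so $T_n\otimes\mathrm{id}_{M_2}\ge0$) yields a positive $2\times 2$ matrix with corners $\sum_nT_n(a_na_n^*)$ and $\sum_nT_n(b_n^*b_n)$; this is precisely the Cauchy--Schwarz device in the proof of Lemma \ref{lem:holder}. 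It gives $T_ng_n=T_n(a_na_n^*)^{1/2}z_nT_n(b_n^*b_n)^{1/2}$ with $\|z_n\|_\infty\le1$, hence a factorization of $(T_ng_n)$ witnessing $\|(T_ng_n)\|_{L_p(\mathcal{N};\ell_1)}\le\|\sum_nT_n(a_na_n^*)\|_p^{1/2}\,\|\sum_nT_n(b_n^*b_n)\|_p^{1/2}$. Applying the positive‑sequence endpoint bounds to $(a_na_n^*)$ and $(b_n^*b_n)$ and taking the infimum over factorizations shows $\|S\|\le 1$ on $L_1(\mathcal{N};\ell_1)$ and $\|S\|\le 4$ on $L_2(\mathcal{N};\ell_1)$ for all finite sequences.

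Then I would invoke the complex interpolation identity $[L_1(\mathcal{N};\ell_1),L_2(\mathcal{N};\ell_1)]_\theta=L_{p_\theta}(\mathcal{N};\ell_1)$ with equality of norms, where $\tfrac1{p_\theta}=1-\tfrac\theta2$ (Pisier); taking $\theta=2-\tfrac2p\in[0,1]$ gives $\|S\|_{L_p(\mathcal{N};\ell_1)\to L_p(\mathcal{N};\ell_1)}\le 1^{1-\theta}4^{\theta}=4^{2-2/p}$, which is the assertion. To keep the argument self‑contained in the spirit of the paper, one may instead run a three‑lines argument by hand: fix a finite positive sequence with $\|(g_n)\|_{L_p(\mathcal{N};\ell_1)}=1$ and a normalized dual element, insert complex powers of the density $\sum_ng_n$ (and of the test functional) to form an analytic family on the strip whose value at $\theta$ is the quantity to be estimated and whose boundary values $\mathrm{Re}=0$ and $\mathrm{Re}=1$ are controlled by the $p=1$ and $p=2$ bounds respectively.

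The main obstacle is the interpolation/analytic step. If one cites Pisier's interpolation theorem for the $\ell_1$‑valued noncommutative $L_p$‑spaces, the rest is routine; if one insists on an elementary three‑lines proof, the difficulty is that the natural interpolating quantities involve noncommutative complex powers $h^{z}$ of operators that need not be bounded, so one must first reduce — by truncation and a standard approximation — to the case of a finite algebra with all relevant operators bounded and invertible, and verify that $S$ is genuinely bounded (not merely densely defined) on both endpoint spaces before interpolating. A second, routine, point is bookkeeping: because all sequences are finite, $L_p(\mathcal{N};\ell_1)$ reduces to its concrete factorized description, and one should note explicitly the complete positivity of $T_n$ used in the matrix trick.
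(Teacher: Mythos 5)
Your proposal is correct in substance, and your fallback ``three-lines argument by hand'' is precisely what the paper does: it normalizes $\|\sum_n g_n\|_p=1$, sets $g=(\sum_n g_n)^{1/2}$ (made invertible by approximation), writes $g_n=gh_ng$ with $\|\sum_n h_n\|_\infty\le 1$, and interpolates the analytic family $F_n(z)=g^{(1-z)p+zp/2}h_ng^{(1-z)p+zp/2}$ between the trivial $L_1$ bound on $\mathrm{Re}\,z=0$ and the $L_2$ bound of Lemma \ref{lemma3.1} (via Lemma \ref{lem:holder}) on $\mathrm{Re}\,z=1$, obtaining $4^\theta=4^{2-2/p}$ at $z=\theta$. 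Your primary route --- citing the complex interpolation identity $[L_1(\mathcal{N};\ell_1),L_2(\mathcal{N};\ell_1)]_\theta=L_{p_\theta}(\mathcal{N};\ell_1)$ --- is genuinely different and shorter on paper, but note two things. First, that identity is itself a nontrivial theorem: in Pisier's memoir it is established through the operator-space-valued theory, which is developed under a hyperfiniteness hypothesis on the algebra, and here $\mathcal{N}=L_\infty(\mathbb{R})\bar\otimes\mathcal{M}$ with $\mathcal{M}$ an arbitrary semifinite von Neumann algebra; invoking it both requires checking applicability in this generality and reintroduces exactly the kind of black box the paper is written to avoid, so the self-contained version is the one that matters here. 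Second, your preliminary upgrade of the endpoint bounds from positive to arbitrary finite sequences (via the completely positive matrix trick) is correct but not needed for the statement as given: the lemma concerns only positive sequences, for which $\|(g_n)\|_{L_p(\mathcal{N};\ell_1)}=\|\sum_n g_n\|_p$ by \eqref{eq1}, so one can (as the paper does) interpolate the scalar quantity $\|\sum_n T_nF_n(z)\|$ directly without ever leaving the positive cone. You correctly identified the remaining technical burden of the hands-on route (unbounded complex powers, reduction to invertible densities), which is exactly where the paper's explicit construction with $h_n=g^{-1}g_ng^{-1}$ and $U_t=g^{-ipt/2}$ does the work.
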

\begin{proof}
  Assume that $\|\sum_{n=1}^N~g_{n}\|_{p}=1$.  Let $g=\left( \sum_{n=1}^N g_{n}\right)^{1/2}$.  We then have $\|g\|_{2p}= 1.$ By approximation, we may assume $g$ is invertible.
Let $h_{n}=g^{-1}g_{n}g^{-1}\geq 0$. Then, $\|\sum_{n=1}^{N}h_{n}\|_{\infty}\leq 1$. Let $\theta$ be defined by $\frac{1}{p}=\frac{1-\theta}{1}+\frac{\theta}{2}$. Let 
$$
F(z)=g^{(1-z)p+zp/2}h_{n}g^{(1-z)p+zp/2}
$$
and $U_{t}=g^{-ipt/2}$.  Note that $F(\theta)=g_{n}$ and $F(it)=U_{t}g^p h_{n} g^{p}U_{t}$, $F(1+it)=U_{t}g^{p/2}h_{n}g^{p/2}U_{t}$. 
Therefore,

\begin{align}
\left\| \sum_{n=1}^{N}T_{n}F(it) \right\|_{1}& \leq \sum_{n=1}^N \left\|T_{n}F(it) \right\|_{1} \leq \sum_{n=1}^N \left\| F(it) \right\|_{1}\leq \sum_{n=1}^N\|g^ph_{n}g^p\|_{1}\nonumber\\ 
&=\nu\left(g^p\sum_{n=1}^Nh_{n}~g^p\right) \nonumber\\
&\leq \nu(g^{2p})=1.
\end{align}
On the other hand,   applying Lemma \ref{lem:holder} to $a_n^*= U_t g^{\frac{p}{2}}h_n^{\frac{1}{2}}, b_n=h_n^{\frac12} g^{\frac{p}{2}}U_t$
\begin{align}
& \left\| \sum_{n=1}^{N} T_{n}F(1+it) \right\|_{2}= \left\| \sum_{n=1}^N~T_{n}\left( U_{t}g^{\frac{p}{2}}h_{n}^{\frac{1}{2}} h_n^{\frac12} a^{\frac{p}{2}}U_{t} \right) \right\|_{2} \nonumber \\
&\leq \left\| \sum_{n=1}^{N}T_{n}\left( U_{t}g^{p/2}h_{n}^{1/2}h_{n}^{1/2}g^{g/2}U_{t}^*\right) \right\|_{2}^{\frac{1}{2}} \left\|\sum_{n=1}^{N}T_{n}\left(U_{t}^*a^{p/2}h_{n}^{1/2}h_{n}^{\frac{1}{2}g^{p/2}}U_{t}\right) \right\|_{2}^{\frac{1}{2}} \nonumber\\
&\leq 4 \left\|U_{t}\left(g^{p/2}\sum_{n=1}^N~h_{n}~g^{p/2}\right)U_{t}^* \right\|_{2}^{\frac{1}{2}} \left\| U_{t}^*\left(g^{p/2}\sum_{n=1}^N~h_{n}~g^{p/2}\right)U_{t} \right\|_{2}^{\frac{1}{2}} \nonumber\\
&\leq 4\|g^p\|_{2}=4.
\end{align}

Then, by the three line lemma, we have
$$
\left\| \sum_{n=1}^{N} T_{n}g_{n} \right\|_{p}= \left\| \sum_{n=1}^N~T_{n}F(\theta) \right\|_{p} \leq 4^{\theta}.
$$
We complete the proof by applying the homogeneity property.
\end{proof}

Finally, we return to the proof of Theorem \ref{thm:main} by duality. 
\begin{proof}
  For $f\in L_p({\N})$, we have that $|f|\in  L_p({\N})$ and $|f|$ has   the same norm with $f$ by definition.  
  We apply \eqref{eq:dualityNorm2}  to the positive sequence in  $L_p({\N})$$T_n(|f|)$, and obtain
\begin{align*}
\|(T_{n}(|f|))\|_{L_{p}(\mathcal{N};\ell_{\infty})}&=\sup\left\{ \nu\left( \sum T_{n}(|f|)  y_{n} \right): y_{n}\geq 0,\|(y_{n})\|_{L_{q}(\mathcal{N};\ell_{1}^{N})}\leq 1\right\} \\
&= \sup\left\{ \nu\left( \sum |f| T_{n}(y_{n}) \right): y_{n}\geq 0,\|(y_{n})\|_{L_{q}(\mathcal{N};\ell_{1}^{N})}\leq 1\right\} \\
&\leq  \sup\left\{ \nu\left( |f|\sum   T_{n}y_{n} \right): z_{n} \geq 0,\|(z_{n})\|_{L_{q}(\mathcal{N};\ell_{1}^{N})}\leq 4^{2-\frac{2}{q}}\right\} \\
& \leq 4^{ \frac{2}{p}}\|f\|_{L_{p}(\mathcal{N})},
\end{align*} 
for all $2\leq p<\infty$.  By \eqref{eqinfty}, $\|(T_{n}(|f|))\|_{L_{p}(\mathcal{N};\ell_{\infty})}\leq 4^{1+\frac2p}\|f\|_{L_{p}(\mathcal{N})}$. By definition \eqref{supxn}, This means  that there exists $F\in L_p({\N}) $ such that $ \|F\|_{L_p({\N})}\leq 4^{1+\frac2p}\|f\|_{L_{p}(\mathcal{N})}$ and $T_n|f|\leq F$.  Theorem \ref{thm:main} follows since $$\frac1{2t}\int_{x-t}^{x+t}|f(y)|dy\leq 2T_n|f|$$ for every $2^{n-1}\leq t<2^{n}, n\in\Z$.
\end{proof}

\bibliographystyle{abbrv}

\end{document}